\theoremstyle{plain}
\newtheorem{thm}{Theorem}
\theoremstyle{definition}
\newtheorem*{defn}{Definition}
\newtheorem{prop}[thm]{Proposition}
\newtheorem{cor}[thm]{Corollary}
\newtheorem{conj}[thm]{Conjecture}
\begin{document}
\title{On a theorem of Erd{\H o}s and Loxton}
\author{Noah Lebowitz-Lockard}
\maketitle

\begin{abstract} Let $a(n)$ be the number of partitions of $n$ of the form $a_1 + a_2 + \cdots + a_k$ where $a_{i + 1}$ is a proper divisor of $a_i$ for all $i < k$. Erd{\H o}s and Loxton showed that the sum of $a(n)$ over all $n \leq x$ is asymptotic to a constant multiple of $x^\rho$ where $s = \rho \approx 1.73$ is the unique solution to the equation $\zeta(s) = 2$ satisfying $s > 1$. In this note, we provide tight bounds on the value of this constant, though we do not find an exact formula for it. In addition, we write an explicit upper bound for $a(n)$.
\end{abstract}

\section{Introduction}

In this paper we consider partitions with a specific factorization-related property. We let $a(n)$ be the number of partitions of $n$ into distinct parts in which every part is a multiple of the next part. For example, $a(10) = 4$ because we have
\[10 = 9 + 1 = 8 + 2 = 6 + 3 + 1.\]
We also define $A(x)$ as the sum
\[\sum_{n \leq x} a(n).\]

It is straightforward to show that these factorizations are closely related to the number of ways one can express a number as a product of numbers greater than $1$. Though we discuss this correspondence in more detail later on, we write a few results about these products here.

\begin{defn} Let $g(n)$ be the number of ways to express the number $n$ as an ordered product of integers greater than $1$ (with $g(1) = 1$ for notational convenience). From here on, we refer to these products as \emph{factorizations} of $n$. (They are also called ``multiplicative partitions" in the literature.) We also let $G(x)$ be the sum of $g(n)$ over all $n \leq x$.
\end{defn}

In 1931, Kalm{\' a}r \cite{Kal} found an asymptotic formula for $G(x)$.

\begin{thm} As $x \to \infty$, we have
\[G(x) \sim -\frac{1}{\rho \zeta'(\rho)} x^\rho,\]
where $s = \rho \approx 1.73$ is the unique real solution to the equation $\zeta(s) = 2$ with $s > 1$.
\end{thm}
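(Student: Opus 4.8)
The plan is to pass to Dirichlet series, locate the rightmost singularity, and then apply a Tauberian theorem. First I would record the key generating identity: since $g(n)$ counts the ordered factorizations of $n$ into factors exceeding $1$, such a factorization is a composition of $n$ whose parts are drawn from $\{2, 3, 4, \dots\}$, and the Dirichlet series of the indicator of $\{2, 3, \dots\}$ is $\zeta(s) - 1$. Summing over the number of factors gives the geometric series
\[\sum_{n=1}^\infty \frac{g(n)}{n^s} = \sum_{k=0}^\infty (\zeta(s) - 1)^k = \frac{1}{2 - \zeta(s)},\]
valid wherever $|\zeta(s) - 1| < 1$. Because $\zeta$ is real and strictly decreasing on $(1, \infty)$ with $\zeta(\rho) = 2$, this inequality holds for all $s$ with $\Re(s) > \rho$; and since the coefficients $g(n)$ are nonnegative, Landau's theorem guarantees that $\rho$ is the abscissa of convergence and that $s = \rho$ is a genuine singularity.

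Next I would analyze $D(s) := 1/(2 - \zeta(s))$ on the closed half-plane $\Re(s) \geq \rho$. As $\rho > 1$, the pole of $\zeta$ at $s = 1$ lies outside this region, so $2 - \zeta(s)$ is holomorphic there and its zeros are exactly the solutions of $\zeta(s) = 2$. At $s = \rho$ there is a simple zero because $\zeta'(\rho) \neq 0$ (indeed $\zeta'(\rho) < 0$), so $D$ has a simple pole at $\rho$ with residue $-1/\zeta'(\rho) > 0$. The crucial point is that this is the only singularity in $\Re(s) \geq \rho$: for $\Re(s) = \rho$ the triangle inequality gives $|\zeta(s) - 1| \leq \sum_{n \geq 2} n^{-\rho} = \zeta(\rho) - 1 = 1$, with equality forcing every term $n^{-s}$ (for $n \geq 2$) to be a positive real, hence $\Im(s) = 0$; thus $\zeta(s) = 2$ on the line occurs only at $s = \rho$, while for $\Re(s) > \rho$ the bound is strict. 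Consequently $D$ is holomorphic on the open half-plane $\Re(s) > \rho$, and $D(s) + \frac{1/\zeta'(\rho)}{s - \rho}$ extends continuously to the closed half-plane $\Re(s) \geq \rho$.

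With this singular structure in hand, I would invoke the Wiener--Ikehara theorem in Delange's form: if $D(s) = \sum_n a_n n^{-s}$ has nonnegative coefficients, converges for $\Re(s) > \rho$, and $D(s) - A/(s - \rho)$ extends continuously to $\Re(s) \geq \rho$, then $\sum_{n \leq x} a_n \sim (A/\rho)\, x^\rho$. Applying this with $a_n = g(n)$ and $A = -1/\zeta'(\rho)$ yields
\[G(x) \sim \frac{1}{\rho}\left(-\frac{1}{\zeta'(\rho)}\right) x^\rho = -\frac{1}{\rho\,\zeta'(\rho)}\, x^\rho,\]
which is the claim. The main obstacle is the input of the middle step, namely verifying that $\rho$ is the unique singularity on the line $\Re(s) = \rho$, since the Tauberian machine collapses if another pole sits on that line; fortunately the elementary triangle-inequality estimate settles it cleanly. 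An alternative via Perron's formula and contour shifting would also work, but it would demand explicit growth bounds on $1/(2 - \zeta(s))$ in vertical strips, which the Tauberian route avoids at the cost of delivering only the leading asymptotic without an error term.
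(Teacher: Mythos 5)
Your proposal is correct, and it is precisely the route the paper itself indicates: the paper gives no proof of this theorem (it is quoted from Kalm\'ar), but remarks that it follows by applying the Wiener--Ikehara theorem to the Dirichlet series of $g(n)$, which is exactly what you carry out via the identity $\sum_n g(n) n^{-s} = 1/(2 - \zeta(s))$, the simple pole at $s = \rho$ with residue $-1/\zeta'(\rho)$, and the verification that $\rho$ is the only zero of $2 - \zeta(s)$ on the line $\Re(s) = \rho$.
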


(Here, $\zeta$ is the Riemann zeta function. Note that $\zeta'(\rho)$ is negative, cancelling out the negative sign in front.) One may also prove this result by applying the Wiener-Ikehara Theorem \cite[Thm. 7.1]{P} to the Dirichlet series of $g(n)$. Ikehara \cite{I} refined the error term and Hwang \cite{Hw} later proved that
\[G(x) = -\frac{1}{\rho \zeta'(\rho)} x^\rho + O(x^\rho \exp(-c (\log \log x)^{(3/2) - \epsilon})),\]
where $\epsilon$ can be any positive number and $c := c(\epsilon)$ is a positive constant. (For the corresponding sum for \emph{unordered} factorizations, see \cite{O}.)

Erd{\H o}s and Loxton \cite[Thm. $2$]{ELox} proved the following result about $A(x)$.

\begin{thm} Define $\rho$ as we did in the asymptotic formula for $G(x)$. Then there exists a positive constant $c$ such that
\[A(x) \sim cx^\rho.\]
In addition, there does not exist a positive number $\epsilon$ for which $A(x) = cx^\rho + O(x^{\rho - \epsilon})$.
\end{thm}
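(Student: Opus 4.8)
The plan is to reduce $a(n)$ to the combinatorics of factorizations, package everything into a Dirichlet series, and read off both assertions from the behaviour of that series near $s = \rho$. A partition counted by $a(n)$ is determined by its least part $d \geq 1$ together with the successive ratios $b_i = a_i/a_{i+1} \geq 2$; writing the ratios from the bottom up as $b_1, \dots, b_r$ (with $r \geq 0$), the parts are $d, db_1, db_1 b_2, \dots$, so that $n = d\, f(b_1, \dots, b_r)$ where $f(b_1, \dots, b_r) = 1 + b_1 + b_1 b_2 + \cdots + b_1 \cdots b_r$ obeys $f(b_1, \dots, b_r) = 1 + b_1 f(b_2, \dots, b_r)$. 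Let $N(v)$ be the number of ratio-sequences with $f = v$ (so $N(1) = 1$). Then $a = \mathbf{1} * N$, whence the identity $D(s) := \sum_n a(n) n^{-s} = \zeta(s) F(s)$, with $F(s) = \sum_v N(v) v^{-s} = \sum_{(b_i)} f(b_1, \dots, b_r)^{-s}$. The recursion for $f$ gives $F(s) = 1 + \sum_{b \geq 2} \sum_v N(v)(1 + bv)^{-s}$, and splitting $(1 + bv)^{-s} = (bv)^{-s} + [(1 + bv)^{-s} - (bv)^{-s}]$ turns the main term into $(\zeta(s) - 1)F(s)$ and leaves a correction $E(s)$, so that $F(s)(2 - \zeta(s)) = 1 + E(s)$. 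Since $|(1 + bv)^{-s} - (bv)^{-s}| \ll |s|(bv)^{-\sigma - 1}$ with $\sigma = \Re s$, the series $E(s)$ converges absolutely for $\Re s > \rho - 1$, and therefore $F(s) = (1 + E(s))/(2 - \zeta(s))$ is meromorphic in that half-plane.

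For the asymptotic I would apply the Wiener--Ikehara theorem to $D(s) = \zeta(s)(1 + E(s))/(2 - \zeta(s))$, whose coefficients are nonnegative. The only pole in $\Re s \geq \rho$ is a simple one at $s = \rho$: from $|\zeta(s) - 1| \leq \zeta(\Re s) - 1 \leq \zeta(\rho) - 1 = 1$ for $\Re s \geq \rho$, with equality forcing $s$ real, the equation $\zeta(s) = 2$ has no root with $\Re s \geq \rho$ other than $s = \rho$, and $\zeta'(\rho) \neq 0$ makes that zero simple. Wiener--Ikehara then yields $A(x) \sim c x^\rho$ with $c = 2(1 + E(\rho))/(-\rho \zeta'(\rho))$, the positivity of $c$ following from the positive residue of $F$ at $\rho$. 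This is exactly where the paper's bounds on the constant enter: $E(\rho)$ admits no closed form, so $c$ can only be estimated.

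For the second statement I would argue by contradiction through $D(s) = s \int_1^\infty A(x) x^{-s-1}\, dx$. If $A(x) = c x^\rho + O(x^{\rho - \epsilon})$, then $D(s) - cs/(s - \rho)$ extends analytically to $\Re s > \rho - \epsilon$, so $D$ would be analytic there apart from the pole at $\rho$. To contradict this I would produce singularities of $D$ with real part tending to $\rho$ from the left, arising from complex zeros of $2 - \zeta(s)$. By Kronecker's theorem, using the $\Q$-linear independence of $\{\log p\}$, I can choose arbitrarily large $t$ with $n^{-it} \approx 1$ simultaneously for all $n$ up to a large cutoff, so that $\zeta(\rho + it)$ lies within $\eta$ of $\zeta(\rho) = 2$ while $\zeta'(\rho + it) \approx \zeta'(\rho) \neq 0$. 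Rouch\'e's theorem on a disc of radius $O(\eta)$ about $\rho + it$ then yields a genuine zero $s_0$ of $2 - \zeta$; by the uniqueness established above it must satisfy $\Re s_0 < \rho$ with $|\Re s_0 - \rho| = O(\eta)$, and letting $\eta \to 0$ gives zeros with $\Re s_0 \to \rho^-$.

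The main obstacle is the final step. A zero $s_0$ of $2 - \zeta$ is a pole of $D$ only if the numerator $\zeta(s)(1 + E(s))$ survives there, and since $\zeta(s_0) = 2 \neq 0$ this reduces to showing $1 + E(s_0) \neq 0$. I do not expect to rule this out pointwise, because the available bound only gives $E(s_0) = O(|s_0|^{o(1)})$ rather than genuine smallness, leaving open a systematic cancellation along the sequence $s_0$. The way I would close the gap is by a counting argument: the number of $2$-points of $\zeta$ in the strip $\rho - \delta < \Re s < \rho$ up to height $T$ is $\gg_\delta T$ by the value-distribution theory of $\zeta$ on vertical lines, while the zeros of the analytic function $1 + E(s)$ in the same strip should be shown, via Jensen's formula and the controlled growth of $E$, to be too sparse to annihilate all of them. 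For every $\delta$ at least one zero of $2 - \zeta$ would then persist as a pole of $D$ with real part in $(\rho - \delta, \rho)$, which is the contradiction sought. Proving the requisite sparsity of the zeros of $1 + E$, or otherwise excluding wholesale cancellation, is the step I expect to be hardest.
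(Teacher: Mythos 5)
A preliminary remark: the paper never proves this theorem---it is quoted from Erd{\H o}s and Loxton \cite[Thm.~2]{ELox}, and the paper's own contribution is only to bound the constant $c$ appearing in it---so your argument has to stand on its own. Its first half essentially does. The parametrization of a partition by its least part $d$ and ratio vector $(b_1,\dots,b_r)$, the identity $D(s)=\zeta(s)F(s)$, and the functional equation $F(s)(2-\zeta(s))=1+E(s)$ with $E$ analytic in $\Re s>\rho-1$ are all correct, as is the argument that $s=\rho$ is the only $2$-point of $\zeta$ with $\Re s\ge\rho$ and that the pole there is simple. Two details need filling: a Wiener--Ikehara statement adapted to abscissa $\rho\ne 1$ (standard), and the strict positivity $1+E(\rho)>0$, which you assert rather than prove; it does follow, since $f(b_1,\dots,b_r)<2b_1\cdots b_r$ gives $F(\sigma)\ge 2^{-\sigma}/(2-\zeta(\sigma))\to\infty$ as $\sigma\to\rho^+$, whereas $1+E(\rho)=0$ would keep $F$ bounded near $\rho$.

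The gap in the second half is genuine, and the repair you propose cannot close it. You need $1+E(s_0)\ne 0$ at at least one $2$-point $s_0$ in each strip $\rho-\delta<\Re s<\rho$, and you hope to get this by showing that the zeros of $1+E$ there are sparser than the $2$-points. But the $2$-points number only $\asymp_\delta T$ up to height $T$, while any Jensen-type bound for $1+E$ built from its available growth (a small power of $|t|$ in the strip, with lower bounds at disc centres available only well to the right of $\Re s=\rho$, which forces the Jensen radii ratio close to $1$ inside the domain $\Re s>\rho-1$) permits on the order of $T\log T$ zeros; a count that allows more zeros than there are $2$-points can never show that some $2$-point survives. The obstruction is fundamental rather than technical: $2-\zeta(s)$ is itself analytic and \emph{bounded} on $\Re s\ge\rho-\delta$ (note $\rho-\delta>1$) and nevertheless has $\asymp T$ zeros there, so no growth hypothesis alone can force an analytic function on this strip to have $o(T)$ zeros. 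Any proof must therefore use arithmetic structure of $E$, and the natural attempt---choosing the Kronecker time $t$ so that $E(s+it)\approx E(s)$ simultaneously with $\zeta(s+it)\approx\zeta(s)$---also breaks down: the frequencies $\log(bv+1)$ occurring in $E$ are not generated by the primes being aligned, and below $\Re s=\rho$ the term-separated series for $E$ is no longer absolutely convergent, so its tail cannot be made uniformly small in $t$. This non-vanishing problem (which is absent for $G(x)$, whose Dirichlet series is exactly $1/(2-\zeta(s))$ with numerator $1$) is precisely where the content of Erd{\H o}s and Loxton's oscillation statement lies, and your proposal does not get past it.
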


Though Erd{\H o}s and Loxton proved this result, they do not provide any bounds on $c$. In this paper we prove the following inequality on $c$.

\begin{thm}\label{main} For every positive integer $k$, the value of $c$ lies between
\[-\frac{2}{\rho \zeta'(\rho)} \sum_{d_1, d_2, \ldots, d_k > 1} \frac{1}{(2 + d_1 + d_1 d_2 + \cdots + (d_1 d_2 \cdots d_k))^\rho}\]
and
\[-\frac{2}{\rho \zeta'(\rho)} \sum_{d_1, d_2, \ldots, d_k > 1} \frac{1}{(1 + d_1 + d_1 d_2 + \cdots + (d_1 d_2 \cdots d_k))^\rho}.\]
\end{thm}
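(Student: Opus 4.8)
The plan is to first recast both $A(x)$ and Kalm\'ar's $G(x)$ as sums over one and the same family of objects. Writing a partition counted by $a(n)$ as $p_0 > p_1 > \cdots > p_k$ with $p_{i+1}\mid p_i$, and recording its smallest part $m=p_k$ together with the ratios $d_i = p_{k-i}/p_{k-i+1}\ge 2$, one obtains a bijection onto triples $(m;d_1,\dots,d_k)$ with $n = m\,S_k$, where $S_k := 1 + d_1 + d_1d_2 + \cdots + d_1\cdots d_k$. Hence $A(x) = \sum_{k\ge 0}\sum_{d_1,\dots,d_k\ge 2}\lfloor x/S_k\rfloor$ and $\sum_n a(n)n^{-s} = \zeta(s)F(s)$, where $F(s) = \sum_{\phi}S(\phi)^{-s}$ runs over all ordered factorizations $\phi=(d_1,\dots,d_k)$ into parts $\ge 2$, with $S(\phi)=S_k$. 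The key observation is that the very same factorizations underlie $G$: since $\sum_n g(n)n^{-s} = \sum_\phi M(\phi)^{-s} = 1/(2-\zeta(s))$ with $M(\phi)=d_1\cdots d_k$, and since $M(\phi)\le S(\phi) < 2M(\phi)$ termwise, $F$ and $1/(2-\zeta)$ are literally the same series with the weight $M(\phi)^{-s}$ replaced by $S(\phi)^{-s}$.

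By the Wiener--Ikehara theorem both Dirichlet series have simple poles at $\rho$, giving $c = \tfrac{2}{\rho}\,\mathrm{Res}_{\rho}F$ and Kalm\'ar's constant $\kappa := -\tfrac{1}{\rho\zeta'(\rho)} = \tfrac1\rho\,\mathrm{Res}_\rho\tfrac1{2-\zeta}$, so that $c/(2\kappa)$ is exactly the ratio of residues $\mathrm{Res}_\rho F \big/ \mathrm{Res}_\rho\!\big(1/(2-\zeta)\big)$. Grading by the number of factors, I would write $F(s) = \sum_k F_k(s)$ and $1/(2-\zeta(s)) = \sum_k(\zeta(s)-1)^k$, where $F_k(s) = \sum_{d_1,\dots,d_k}S_k^{-s}$. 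At $s=\rho$ the quantity $F_k(\rho)=\sum_{d_1,\dots,d_k}S_k^{-\rho}$ is precisely the upper-bound sum in the theorem, while $\widetilde F_k(\rho):=\sum_{d_1,\dots,d_k}(1+S_k)^{-\rho}$ is the lower-bound sum, since $1+S_k = 2+d_1+\cdots+d_1\cdots d_k$. The theorem is thus equivalent to the chain $\widetilde F_k(\rho)\le c/(2\kappa)\le F_k(\rho)$ for every $k$.

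I would prove this by identifying $c/(2\kappa)=L:=\lim_k F_k(\rho)$ and trapping $L$ between two monotone sequences. The engine is the recursion $S_{k}(d_1,\dots,d_k)=1+d_1\,S_{k-1}(d_2,\dots,d_k)$. Summing $(1+d_1 S)^{-\rho}$ over $d_1\ge 2$ and using the defining relation $\zeta(\rho)-1=1$ gives the clean inequality $\sum_{d_1\ge 2}(1+d_1S)^{-\rho}<\sum_{d_1\ge 2}(d_1S)^{-\rho}=S^{-\rho}\sum_{d_1\ge 2}d_1^{-\rho}=S^{-\rho}$, whence $F_{k+1}(\rho)<F_k(\rho)$; so $F_k(\rho)\downarrow L$ and $L\le F_k(\rho)$. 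A parallel elementary estimate for $\sum_{d_1\ge2}(2+d_1 S)^{-\rho}$ shows $\widetilde F_k(\rho)$ is increasing, while $F_k(\rho)-\widetilde F_k(\rho)\le \rho\,2^{-k}F_k(\rho)\to 0$ (from $S_k\ge 2^k$ and the mean value theorem). Hence $\widetilde F_k(\rho)\uparrow L$ and $\widetilde F_k(\rho)\le L$, and combining the two sides yields $\widetilde F_k(\rho)\le L\le F_k(\rho)$.

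The step I expect to be the crux is the analytic identification $L = c/(2\kappa)$, i.e.\ that the length-graded arithmetic limit equals the ratio of residues. Writing $F(s)\big/\big(1/(2-\zeta(s))\big) = \sum_k (1-q)q^k\,\beta_k(s)$ with $q=\zeta(s)-1$ and $\beta_k(s)=F_k(s)/q^k\in[2^{-s},1]$, one has $\beta_k(\rho)=F_k(\rho)\to L$; as $s\to\rho^+$ the geometric weights $(1-q)q^k$ concentrate on large $k$, so the ratio ought to tend to $L$. Making this precise is an Abelian/Tauberian argument that needs uniform control of $\beta_k(s)$ for $k$ large and $s$ near $\rho$, and this is where the genuine difficulty lies. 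The negligibility of the short chains (length $<k$, contributing $o(x^\rho)$) and of the floor-function discrepancies is then routine by comparison with the asymptotics of Kalm\'ar and of Erd{\H o}s--Loxton, which I would invoke rather than reprove.
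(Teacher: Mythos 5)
Your route is genuinely different from the paper's: the paper never forms a Dirichlet series, but instead traps the counting function $B(x)$ between $\sum_{d_1, \ldots, d_k > 1} G(x/(2 + d_1 + \cdots + d_1 \cdots d_k))$ and $\sum_{d_1, \ldots, d_k > 1} G(x/(1 + d_1 + \cdots + d_1 \cdots d_k))$ by a direct counting argument and then applies Kalm\'ar's asymptotic termwise. Your combinatorial groundwork is correct and clean: the bijection $n = mS_k$, the identity $\sum_n a(n) n^{-s} = \zeta(s) F(s)$, the monotonicity $F_{k+1}(\rho) < F_k(\rho)$, the inequality $2 + d_1 S \le d_1(1 + S)$ which makes $\widetilde{F}_k(\rho)$ nondecreasing, and the bound $F_k(\rho) - \widetilde{F}_k(\rho) = O(2^{-k})$ all check out. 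But the proof as written has a genuine gap exactly where you flag it: the identification $L = c/(2\kappa)$ is never carried out, and without it you have only shown that your two bounding sequences share a common limit, not that this limit equals $c/(2\kappa)$, so the theorem does not follow. There is also a directional error in the analytic setup: Wiener--Ikehara deduces $A(x) \sim cx^\rho$ \emph{from} a pole of the Dirichlet series, so you cannot cite it to assert that $F$ has a simple pole at $\rho$ --- you have no meromorphic continuation of $F$ at all. What you actually need is the opposite, elementary (Abelian) direction: Erd\H{o}s--Loxton's $A(x) \sim cx^\rho$ plus partial summation give $(s - \rho)\zeta(s)F(s) \to c\rho$ as $s \to \rho^+$ along the reals, hence $F(s)(2 - \zeta(s)) \to -c\rho\zeta'(\rho)/2 = c/(2\kappa)$.

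The good news is that the ``genuine difficulty'' you anticipate dissolves, because your own termwise inequalities hold for every real $s > \rho$, not just at $s = \rho$, and they supply precisely the uniform control you need. Write $q = \zeta(s) - 1 \in (0,1)$ for $s > \rho$. The recursion $S_{k+1} = 1 + d_1 S_k$ together with $(1 + d_1 S)^{-s} < (d_1 S)^{-s}$ gives $F_{k+1}(s) \le q F_k(s)$, so $\beta_k(s) = F_k(s)/q^k$ is nonincreasing in $k$ with $\beta_0(s) = 1$; and $2 + d_1 S \le d_1(1 + S)$ gives $\widetilde{F}_{k+1}(s) \ge q \widetilde{F}_k(s)$, so $\widetilde{F}_k(s)/q^k$ is nondecreasing and bounded above by $\beta_k(s)$. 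Hence for every fixed $K$,
\[\widetilde{F}_K(s) \;\le\; F(s)\bigl(2 - \zeta(s)\bigr) \;=\; \sum_{k \ge 0} (1 - q) q^k \beta_k(s) \;\le\; \bigl(1 - q^K\bigr) + \frac{F_K(s)}{q^K}.\]
Since $F_K$ and $\widetilde{F}_K$ have nonnegative terms and converge at $\rho$, monotone convergence gives $F_K(s) \to F_K(\rho)$ and $\widetilde{F}_K(s) \to \widetilde{F}_K(\rho)$ as $s \downarrow \rho$. Letting $s \to \rho^+$ (so $q \to 1^-$) and then $K \to \infty$ traps $\lim_{s \to \rho^+} F(s)(2 - \zeta(s))$ between $\widetilde{F}_K(\rho) \uparrow L$ and $F_K(\rho) \downarrow L$, so this limit is $L$; combined with the Abelian computation above, $L = c/(2\kappa)$, and your chain $\widetilde{F}_k(\rho) \le L \le F_k(\rho)$ becomes the theorem. (Once the argument is run this way, your closing remarks about short chains and floor-function discrepancies are moot: the Dirichlet-series identity is exact, and the geometric weights $(1 - q)q^k$ automatically kill the contribution of bounded $k$.) By comparison, the paper's counting approach avoids all analysis at $s > \rho$, at the cost of a separate argument that partitions with at most $k$ parts contribute negligibly.
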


In order to show that these sums converge, we note that
\begin{eqnarray*}
\sum_{d_1, d_2, \ldots, d_k > 1} \frac{1}{(1 + d_1 + d_1 d_2 + \cdots + (d_1 d_2 \cdots d_k))^\rho} & < & \sum_{d_1, d_2, \ldots, d_k > 1} \frac{1}{(d_1 d_2 \cdots d_k)^\rho} \\
& = & \left(\sum_{d = 2}^\infty \frac{1}{d^\rho}\right)^k \\
& = & (\zeta(\rho) - 1)^k \\
& = & 1^k \\
& = & 1.
\end{eqnarray*}
We may also observe that as $k$ increases the upper and lower bounds on $c$ get closer together. Because $d_i > 1$ for all $i$, we have $d_1 d_2 \cdots d_k \geq 2^k$ for all tuples $(d_1, d_2, \ldots, d_k)$. Therefore,
\[\frac{(2 + d_1 + d_1 d_2 + \cdots + (d_1 d_2 \cdots d_k))^\rho}{(1 + d_1 + d_1 d_2 + \cdots + (d_1 d_2 \cdots d_k))^\rho} < \left(1 + \frac{1}{2^k}\right)^\rho\]
for any such tuple. The ratio between the upper and lower sums is also at most $(1 + 2^{-k})^\rho$. If we take the limit as $k \to \infty$, then we obtain an expression for $c$.

\begin{cor} We have
\[c = -\frac{2}{\rho \zeta'(\rho)} \lim_{k \to \infty} \sum_{d_1, d_2, \ldots, d_k > 1} \frac{1}{(d_1 + d_1 d_2 + \cdots + (d_1 d_2 \cdots d_k))^\rho}.\]
\end{cor}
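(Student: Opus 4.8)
The plan is to sandwich the corollary's sum between the two families of bounds from Theorem~\ref{main} and pass to the limit. Write $\kappa := -\frac{2}{\rho\zeta'(\rho)} > 0$ and, for each tuple $d_1, \ldots, d_k > 1$, set $T = d_1 + d_1 d_2 + \cdots + (d_1 d_2 \cdots d_k)$. Denote by $\Sigma_k^{(0)}, \Sigma_k^{(1)}, \Sigma_k^{(2)}$ the sums over all such tuples of $T^{-\rho}$, $(1+T)^{-\rho}$, and $(2+T)^{-\rho}$ respectively, so that Theorem~\ref{main} reads $\kappa \Sigma_k^{(2)} \le c \le \kappa \Sigma_k^{(1)}$, while the corollary asks us to prove $c = \kappa \lim_{k\to\infty}\Sigma_k^{(0)}$.

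First I would establish that $\kappa\Sigma_k^{(1)} \to c$ as $k \to \infty$. The ratio estimate in the text gives $\Sigma_k^{(1)} \le (1+2^{-k})^\rho \Sigma_k^{(2)}$, while the convergence computation shows $\Sigma_k^{(2)} < \Sigma_k^{(1)} < 1$. Since $c$ lies in $[\kappa\Sigma_k^{(2)}, \kappa\Sigma_k^{(1)}]$, we obtain $0 \le \kappa\Sigma_k^{(1)} - c \le \kappa(\Sigma_k^{(1)} - \Sigma_k^{(2)}) \le \kappa\bigl((1+2^{-k})^\rho - 1\bigr)\Sigma_k^{(2)} \le \kappa\bigl((1+2^{-k})^\rho - 1\bigr)$, and the right-hand side tends to $0$. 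Thus both outer bounds converge to $c$.

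Next I would control $\Sigma_k^{(0)}$ against $\Sigma_k^{(1)}$. The point worth noting is that $\Sigma_k^{(0)}$ is strictly larger than both bounds of Theorem~\ref{main}, so one cannot squeeze it directly from that statement; instead I compare term by term. Since $T \ge d_1 d_2 \cdots d_k \ge 2^k$, each summand satisfies $T^{-\rho} = (1 + 1/T)^\rho (1+T)^{-\rho} \le (1 + 2^{-k})^\rho (1+T)^{-\rho}$, and summing over all tuples gives $\Sigma_k^{(1)} < \Sigma_k^{(0)} \le (1+2^{-k})^\rho \Sigma_k^{(1)}$. Multiplying through by $\kappa$ yields $\kappa\Sigma_k^{(1)} < \kappa\Sigma_k^{(0)} \le (1+2^{-k})^\rho \kappa\Sigma_k^{(1)}$.

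Finally I would let $k \to \infty$ and apply the squeeze theorem: the left end $\kappa\Sigma_k^{(1)}$ tends to $c$ by the first step, and the right end is $(1+2^{-k})^\rho$ times that, which also tends to $c$ because $(1+2^{-k})^\rho \to 1$. Hence $\lim_{k\to\infty}\Sigma_k^{(0)}$ exists and $\kappa\lim_{k\to\infty}\Sigma_k^{(0)} = c$, which is exactly the asserted formula. The only genuinely new ingredient beyond Theorem~\ref{main} is the term-by-term ratio estimate relating $\Sigma_k^{(0)}$ to $\Sigma_k^{(1)}$; everything else reuses the $(1+2^{-k})^\rho$ control already present in the text, so I do not anticipate a serious obstacle.
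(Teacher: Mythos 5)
Your proof is correct and takes essentially the same approach as the paper, which sketches exactly this squeeze argument in the paragraph preceding the corollary: since every tuple has $d_1 d_2 \cdots d_k \geq 2^k$, all three sums agree up to a factor $(1 + 2^{-k})^\rho \to 1$, so both bounds of Theorem~\ref{main} and the constant-free sum converge to the same limit. Your only addition is making explicit the term-by-term comparison of $T^{-\rho}$ with $(1+T)^{-\rho}$, a step the paper leaves implicit.
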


The sums in Theorem \ref{main} have closed forms in the $k = 1$ case, namely
\[-\frac{2(1 - 2^{-\rho} - 3^{-\rho})}{\rho \zeta'(\rho)} \leq c \leq -\frac{2(1 - 2^{-\rho})}{\rho \zeta'(\rho)}.\]
Unfortunately, I have been unable to obtain good numerical bounds on $c$. The previous inequality only gives us $c \in [0.349, 0.444]$.

If we remove the restriction that the parts have to be distinct, we obtain a completely different result.

\begin{thm}[{\cite[Thm. $3$]{ELox}}] Let $f(n)$ be the number of partitions of $n$ of the form $a_1 + a_2 + \cdots + a_k$ with $a_{i + 1} | a_i$ for all $i < k$, where the $a_i$'s are not necessarily distinct. Then,
\begin{align*}
\log f(n) = & \frac{1}{2 \log 2} (\log n - \log \log n)^2 + \left(\frac{1}{2} + \frac{1 + \log \log 2}{\log 2}\right) \log n - \left(1 + \frac{\log \log 2}{\log 2}\right) \log \log n \\
& + V\left(\frac{1}{\log 2} (\log n - \log \log n)\right) + o(1),
\end{align*}
where $V(t)$ is a periodic function with period $1$.
\end{thm}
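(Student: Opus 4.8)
The plan is to recognize this as an instance of Mahler's partition problem and to carry out a de Bruijn-style saddle-point analysis. First I would record the combinatorial structure of $f$. Writing a partition counted by $f(n)$ in decreasing order $a_1 \geq a_2 \geq \cdots \geq a_k$ with $a_{i+1} \mid a_i$, note that the smallest part $a_k$ divides every part, so dividing through by $a_k$ gives a chain partition of $n/a_k$ whose smallest part is $1$; moreover chain partitions of $m$ with smallest part $1$ correspond bijectively, by deleting that part, to all chain partitions of $m-1$. Together these yield the recursion
\[ f(n) = \sum_{d \mid n} f(d-1), \qquad f(0) = 1, \]
equivalently the functional equation
\[ F(x) - 1 = \sum_{e \geq 0} f(e)\,\frac{x^{e+1}}{1 - x^{e+1}}, \qquad F(x) := \sum_{n \geq 0} f(n) x^n. \]
I would also note the elementary lower bound $f(n) \geq b(n)$, where $b(n)$ is the number of partitions of $n$ into powers of $2$: sorting the powers of $2$ in a binary partition into decreasing order produces a divisibility chain, so every binary partition is counted by $f(n)$. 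Since $\log b(n) \sim (2\log 2)^{-1}(\log n)^2$ by the theorem of Mahler and de Bruijn, this already pins down the leading term and motivates the shape of the answer.

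The main work is the asymptotic evaluation of $f(n)$ itself, for which I would study $F(e^{-t})$ as $t \to 0^+$. Taking logarithms and applying a Mellin transform in $t$, the dominant singularity should be a double pole at $s = 0$, which produces the quadratic term $(2\log 2)^{-1}\bigl(\log(1/t)\bigr)^2$ in $\log F(e^{-t})$; the factor $\log 2$ enters because the longest admissible chains have ratio $2$ and length $\sim \log_2 n$. A further line of simple poles at $s \in (2\pi i/\log 2)\Z$ contributes the bounded oscillatory term that becomes the period-$1$ function $V$. I would then transfer this estimate back to the coefficients $f(n)$ by a saddle-point (effective Tauberian) argument. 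The saddle lies near $t \asymp (\log n)/n$, whence $\log(1/t) \sim \log(n/\log n) = \log n - \log\log n$; substituting it converts the quadratic in $\log(1/t)$ into the stated quadratic in $\log n - \log\log n$ and generates the coefficients of $\log n$ and $\log\log n$.

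The hard part will be twofold. First, unlike the binary partition function, $F$ has no infinite-product form: its functional equation is self-referential, so I would have to bootstrap, feeding a provisional estimate for $f(e)$ into the sum defining $F(e^{-t})$ and checking self-consistency, while keeping the error small enough to resolve the coefficients of $\log n$ and $\log\log n$ and not merely the leading term. Second, I must control the contribution of divisor chains with ratios other than $2$: these are only polynomially numerous in $n$, since they are governed by the ordered-factorization counts underlying $G(x)$, so a priori they could perturb the coefficient of $\log n$. The crux is to show that each non-binary chain is strictly sparser, its multiplicity-generating function having the smaller exponent $(2\log d)^{-1}$, and that the aggregate of all such chains is absorbed into $V$ and the stated lower-order terms without disturbing the leading constant. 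Making the saddle-point estimate uniform across the relevant range of $t$ is where the genuine effort lies.
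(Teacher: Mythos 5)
Two things need saying, and the first is about the comparison you were asked to make: the paper does not prove this statement at all. It is quoted from Erd\H{o}s and Loxton \cite[Thm.~3]{ELox}, with the remark that de Bruijn \cite{dB} had earlier proved the analogous formula for binary partitions, \emph{with a different periodic function}. So your proposal can only be judged on its own merits, and on those merits it is a program, not a proof. Your preliminaries are correct and worth keeping: the recursion $f(n) = \sum_{d \mid n} f(d-1)$ with $f(0) = 1$ (the paper records the identical recursion for $a(n)$), the functional equation $F(x) - 1 = \sum_{e \geq 0} f(e)\, x^{e+1}/(1 - x^{e+1})$, and the lower bound by binary partitions. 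But everything past that is written in the conditional --- ``should be a double pole,'' ``I would have to bootstrap,'' ``the crux is to show'' --- and the two difficulties you defer are not technical residue; they are the entire content of the theorem. A formula for $\log f(n)$ with error $o(1)$, including an identification of the periodic term $V$, cannot be extracted from a leading-order saddle-point heuristic: de Bruijn's treatment of the binary case, where the generating function has an explicit infinite product and a clean functional equation, is already a substantial piece of analysis, and your $F$ satisfies only the self-referential equation above, with no mechanism supplied for closing the bootstrap at the required precision.

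Second, one concrete assertion in your sketch is wrong, and it conceals exactly the subtlety that makes the theorem hard. Chain partitions with some ratio different from $2$ are not ``polynomially numerous in $n$'': only the ratio sequences (skeletons) are polynomially numerous, of order $n^{\rho}$, whereas the partitions built on them number $\exp\bigl(\Theta((\log n)^2)\bigr)$. For instance, partitions into parts of the form $3 \cdot 2^{j}$ together with parts equal to $1$ are legitimate chain partitions, and there are $\exp\bigl((1+o(1))(\log n)^2/(2\log 2)\bigr)$ of them --- only polynomially fewer than the binary partitions. Worse for your plan, the dangerous chains are not the pure ratio-$d$ chains with exponent $(2\log d)^{-1}$ that you propose to discard as ``strictly sparser''; they are the \emph{mixed} chains that are binary except in $O(1)$ places (say, a single ratio $3$ or $4$ near the top), which have the full exponent $(2\log 2)^{-1}$ and are only a bounded factor sparser than the binary ones. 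Their aggregate changes $\log f(n)$ at the $O(1)$ scale, which is visible in the statement itself: $V$ is \emph{not} de Bruijn's periodic function. So this contribution cannot be ``absorbed into $V$'' by an upper bound; it must be evaluated exactly, uniformly in $n$, and that evaluation --- together with the bootstrap for the self-referential functional equation --- is precisely the proof you have not given.
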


de Bruijn \cite{dB} had previously proved this result holds for \emph{binary} partitions, i.e., partitions into powers of $2$, though he did not have the same periodic function $V(x)$. (Mahler \cite{Mah} had previously obtained a weaker asymptotic.)

In the next section, we prove Theorem \ref{main} and in Section $3$ we discuss some possibilities for future research.

\section{The proof}

Before proving our main result, we write a short argument relating $A(x)$ to $G(x)$. From here on, we let $b(n)$ be the number of partitions of $n$ counted by $a(n)$ with the additional restriction that the last part is not $1$. Note that the number of partitions in which the smallest part is $1$ is simply $b(n - 1)$ as we can simply remove the $1$ from our partition to obtain a partition of $n - 1$ into distinct parts in which every part is a multiple of the next part and the smallest part is not $1$. Therefore, $a(n) = b(n) + b(n - 1)$. If we let $B(x)$ be the sum of $b(n)$ over all $n \leq x$, then we have $A(x) = B(x) + B(x - 1)$, which is asymptotic to $2B(x)$. From here on, we consider $B(x)$ for notational convenience.

Consider the following map between partitions and factorizations. Let $a_1 + a_2 + \cdots + a_k$ be factorization of $n$ with $a_k > 1$. As usual, we suppose that $a_i$ is a multiple of $a_{i + 1}$ for all $i < k$. Then $(a_1/a_2)(a_2/a_3) \cdots (a_{k - 1}/a_k) (a_k)$ is a factorization of $a_1$. Note that this map is bijective. The inverse maps the factorization $d_1 d_2 \cdots d_\ell$ to the partition $(d_1 d_2 \cdots d_\ell) + (d_2 d_3 \cdots d_\ell) + \cdots + d_\ell$. Using this map, we can bound $B(n)$.

\begin{prop} For all $x$, we have $G(x/2) \leq B(x) \leq G(x)$.
\end{prop}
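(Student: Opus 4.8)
The plan is to read both inequalities directly off the bijection described above, using two elementary size facts about a partition counted by $b$ and the factorization it corresponds to. Write such a partition as $a_1 + a_2 + \cdots + a_k$ with $a_1 > a_2 > \cdots > a_k > 1$; the bijection sends it to a factorization of $m := a_1$, its largest part. Thus $B(x)$ counts the partitions (counted by some $b(n)$) whose sum $n = a_1 + \cdots + a_k$ satisfies $n \le x$, whereas $G(x) = \sum_{m \le x} g(m)$ counts the same partitions indexed instead by their largest part $m \le x$, together with the single extra term $g(1) = 1$ that has no partition preimage.

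The two facts I would establish are $m \le n$ and $n < 2m$. The first is immediate, since the total $n$ is at least its largest summand. For the second, I would use that $a_{i + 1}$ is a \emph{proper} divisor of $a_i$, so $a_{i + 1} \le a_i / 2$ and hence $a_i \le m / 2^{i - 1}$ for every $i$; summing the resulting geometric series gives $n = \sum_i a_i < m \sum_{i \ge 1} 2^{1 - i} = 2m$. This halving estimate is the only real content of the argument.

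With these in hand, both bounds follow by comparing indexing sets. For the upper bound, every partition counted by $B(x)$ has $m \le n \le x$, so it maps injectively to a factorization of some $m$ with $2 \le m \le x$; hence $B(x) \le \sum_{2 \le m \le x} g(m) = G(x) - 1 \le G(x)$. For the lower bound, every factorization of an $m$ with $2 \le m \le x / 2$ yields a partition of sum $n < 2m \le x$, so it is counted by $B(x)$, giving $G(x/2) - 1 \le B(x)$. I would then close the remaining unit gap by noting that each integer $m$ with $x/2 < m \le x$ gives the one-part partition equal to $m$ alone, which is counted by $B(x)$ but has largest part exceeding $x/2$; for $x \ge 2$ at least one such $m \ge 2$ exists, supplying the missing unit and yielding $G(x/2) \le B(x)$ (the small cases $x < 2$ being checked directly).

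I expect the only delicate point to be exactly this bookkeeping at the boundary: reconciling the convention $g(1) = 1$, which contributes to $G$ but corresponds to no partition, with the rounding implicit in $G(x/2)$ and the strict versus non-strict inequalities. Everything else reduces to the bijection together with the estimate $a_{i + 1} \le a_i / 2$.
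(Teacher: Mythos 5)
Your proposal is correct and takes essentially the same route as the paper: the same bijection between partitions and ordered factorizations, together with the two size facts $a_1 \leq n$ and $n < 2a_1$ (the latter from $a_{i+1} \leq a_i/2$ and the geometric series). You are in fact slightly more careful than the paper, whose own proof silently ignores the boundary term $g(1) = 1$ in the lower bound; your observation that a one-part partition with part in $(x/2, x]$ lies outside the image of the factorizations of $m \leq x/2$ cleanly closes that unit gap.
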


\begin{proof} Let $a_1 + a_2 + \cdots + a_k$ be a partition of some $n \leq x$. Then, $a_1 \leq n \leq x$. Applying our map turns this partition into a factorization of $a_1 \leq n \leq x$. Therefore, $B(x) \leq G(x)$.

Likewise, suppose that $a_1 + a_2 + \cdots + a_k$ is a partition with $a_1 \leq x/2$. Because $a_{i + 1}$ is a proper divisor of $a_i$, we have $a_{i + 1} \leq a_i/2$, giving us
\[a_1 + a_2 + \cdots + a_k \leq a_1 + (a_1/2) + \cdots + (a_1/2^{k - 1}) < 2a_1 \leq x.\]
So, every partition in which the first number is $\leq x/2$ corresponds to a factorization of some number $< x$. Hence, $B(x) \geq G(x/2)$.
\end{proof}

Using the asymptotic formula for $G(x)$, we can bound $c$. We have
\[-\frac{2}{2^\rho \rho \zeta'(\rho)} \leq c \leq -\frac{2}{\rho \zeta'(\rho)}.\]
Applying a more sophisticated version of the argument in our previous proof gives us our main result.

\begin{proof}[Proof of Theorem \ref{main}] Once again, we consider $B(x)$. Fix a positive integer $k$. The number of partitions of $n \leq x$ with our desired properties and at most $k$ parts is $n^{1 + o_k (1)}$. Consider one such partition $a_1 + a_2 + \cdots + a_\ell$ with $\ell \leq k$. There are $\lfloor x \rfloor$ possible values of $a_1$. However, if $i > 1$, then there are at most $d(a_1)$ possible values of $a_i$ because $a_i$ must divide $a_1$. The number of possible divisors of a given number $m$ is at most $\exp(O(\log m/\log \log m))$ \cite[Thm. $317$]{HaW}. Because the total number of acceptable partitions with at most $k$ parts is negligible ($x^{1 + o(1)}$), we assume that every partition has more than $k$ parts from this point.

Fix a tuple $(D_1, D_2, \ldots, D_k)$ of integers greater than $1$. We can bound the number of possible sums $a_1 + a_2 + \cdots + a_m$ with $m > k$ and $a_i/a_{i + 1} = D_i$ for all $i \leq k$. Note that
\begin{eqnarray*}
a_1 + a_2 + \cdots + a_m & \geq & a_1 + a_2 + \cdots + a_{k + 1} \\
& = & (D_1 D_2 \cdots D_k) a_{k + 1} + (D_2 D_3 \cdots D_k) a_{k + 1} + \cdots + D_k a_{k + 1} + a_{k + 1} \\
& = & (1 + D_k + D_{k - 1} D_k + \cdots + (D_1 D_2 \cdots D_k)) a_{k + 1}.
\end{eqnarray*}
If $a_1 + a_2 + \cdots + a_{k + 1} \leq x$, then $a_{k + 1} \leq x/(1 + D_k + D_{k - 1} D_k + \cdots + (D_1 D_2 \cdots D_k))$. Given $a_{k + 1}$ and $(D_1, \ldots, D_k)$, we can bound the number of possible values for the other $a_i$'s. If $i \leq k$, then $a_i$ is uniquely determined by $a_{k + 1}$ and the $D_j$'s. In addition, $a_{k + 1}, a_{k + 2}, \ldots, a_m$ uniquely determines a factorization of $a_{k + 1}$. So, there are $g(a_{k + 1})$ possible tuples given our constraints. Summing over all possible $a_{k + 1}$ gives us an upper bound of
\[G(x/(1 + D_k + D_{k - 1} D_k + \cdots + (D_1 D_2 \cdots D_k))).\]

We use a similar argument to obtain the lower bound. This time we observe that
\[a_1 + a_2 + \cdots + a_m \leq a_1 + a_2 + \cdots + a_{k + 1} + (a_{k + 1}/2) + (a_{k + 1}/4) + \cdots < a_1 + a_2 + \cdots + a_k + 2a_{k + 1}.\]
If $a_1 + a_2 + \cdots + a_k + 2a_{k + 1} \leq x$, then $a_1 + a_2 + \cdots + a_m \leq x$ as well. By an argument similar to the one for the lower bound, we obtain $G(x/(2 + D_k + D_{k - 1} D_k + \cdots + (D_1 D_2 \cdots D_k)))$ tuples. Setting $d_i = D_{k - i}$ and summing over all tuples $(d_1, d_2, \ldots, d_k)$ puts $B(x)$ between
\[\sum_{d_1, d_2, \ldots, d_k > 1} G\left(\frac{x}{2 + d_1 + d_1 d_2 + \cdots + (d_1 d_2 \cdots d_k)}\right)\]
and
\[\sum_{d_1, d_2, \ldots, d_k > 1} G\left(\frac{x}{1 + d_1 + d_1 d_2 + \cdots + (d_1 d_2 \cdots d_k)}\right).\]
To finish the proof, we simply use our asymptotic formula for $G(x)$ and factor out $x^\rho$.
\end{proof}

Unfortunately, I am unaware of any closed form for these sums. However, if $k = 1$, then we have the following simplifications:
\begin{eqnarray*}
\sum_{d > 1} \frac{1}{(1 + d)^\rho} & = & \sum_{d = 3}^\infty \frac{1}{d^\rho} = \zeta(\rho) - 1 - 2^{-\rho} = 1 - 2^{-\rho}, \\
\sum_{d > 1} \frac{1}{(2 + d)^\rho} & = & \sum_{d = 4}^\infty \frac{1}{d^\rho} = \zeta(\rho) - 1 - 2^{-\rho} - 3^{-\rho} = 1 - 2^{-\rho} - 3^{-\rho}.
\end{eqnarray*}
These sums imply that
\[-\frac{2(1 - 2^{-\rho} - 3^{-\rho})}{\rho \zeta'(\rho)} \leq c \leq -\frac{2(1 - 2^{-\rho})}{\rho \zeta'(\rho)}.\]

\section{Bounds on $a(n)$}

In this section, we write a few simple arguments bounding the maximal orders of $a(n)$ and $b(n)$. We also lay out a few problems for future research. From here on, we use the notation $f(x) \lesssim g(x)$ to mean $f(x) \leq (1 + o(1)) g(x)$ as $x \to \infty$.

Though we have an asymptotic formula for $A(x)$, we have few non-trivial results about the maximal order of $a(n)$. Clearly, $a(n) \in [A(n)/n, A(n)]$, which implies that
\[cx^{\rho - 1} \lesssim \max_{n \leq x} a(n) \lesssim cx^\rho,\]
\[(c/2)x^{\rho - 1} \lesssim \max_{n \leq x} b(n) \lesssim (c/2)x^\rho.\]

Estimates on the maximal order of $g(n)$ have a rich history. Hille \cite{Hi} initially showed that for any $\epsilon > 0$, there are infinitely many $n$ for which $g(n) > n^{\rho - \epsilon}$. This result has been improved numerous times \cite{Kal, E, Ev, KlLu}. Most recently, Del{\' e}glise, Hernane, and Nicolas \cite{DHN} showed that there exist positive constants $C_1$ and $C_2$ such that
\[x^\rho \exp\left(-C_1 \frac{(\log x)^{1/\rho}}{\log \log x}\right) < \max_{n \leq x} g(n) < x^\rho \exp\left(-C_2 \frac{(\log x)^{1/\rho}}{\log \log x}\right)\]
for all sufficiently large $x$. They also conjecture that there exists a positive constant $C$ such that
\[\max_{n \leq x} g(n) = x^\rho \exp\left(-(C + o(1)) \frac{(\log x)^{1/\rho}}{\log \log x}\right).\]

Chor, Lemke, and Mador \cite{ChLM} found an explicit bound for $g(n)$. A few years later, Coppersmith and Lowenstein \cite{CoLo} found an elementary proof of this bound, which we rewrite below.

\begin{thm} For all $n$, we have $g(n) \leq n^\rho$.
\end{thm}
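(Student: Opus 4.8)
The plan is to prove the bound by strong induction on $n$, using the fundamental recurrence for ordered factorizations together with the defining property $\zeta(\rho) = 2$. First I would record the recurrence: since any ordered factorization of $n > 1$ begins with some divisor $d > 1$ of $n$ and continues with an arbitrary ordered factorization of the complementary factor $n/d$, we have
\[g(n) = \sum_{\substack{d \mid n \\ d > 1}} g(n/d).\]
The base case $n = 1$ is immediate, since $g(1) = 1 = 1^\rho$.

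For the inductive step I would assume $g(m) \leq m^\rho$ for every $m < n$. Each quotient $n/d$ appearing in the recurrence satisfies $n/d < n$ (because $d > 1$), so the inductive hypothesis applies to every term and yields
\[g(n) \leq \sum_{\substack{d \mid n \\ d > 1}} (n/d)^\rho = n^\rho \sum_{\substack{d \mid n \\ d > 1}} \frac{1}{d^\rho}.\]
The crux is then to bound the divisor sum by $1$. Since the divisors $d > 1$ of $n$ form a subset of $\{2, 3, 4, \ldots\}$ and the terms $d^{-\rho}$ are positive, I would estimate
\[\sum_{\substack{d \mid n \\ d > 1}} \frac{1}{d^\rho} \leq \sum_{d = 2}^\infty \frac{1}{d^\rho} = \zeta(\rho) - 1 = 1,\]
where the last equality is exactly the definition $\zeta(\rho) = 2$. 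Combining the two displays gives $g(n) \leq n^\rho$, which closes the induction.

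There is no genuine analytic obstacle here; the whole argument turns on the single observation that $\rho$ is chosen precisely so that the tail $\sum_{d \geq 2} d^{-\rho}$ equals $1$. The only point requiring care is confirming that the divisor sum is a true sub-sum of the full series, so that restricting the summation to divisors of $n$ can only decrease its value. This is what allows the induction to close with the sharp constant $1$ rather than with something strictly larger, and it is also why the exponent $\rho$ cannot be lowered: any $s < \rho$ would make the tail sum exceed $1$, destroying the inductive step.
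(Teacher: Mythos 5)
Your proof is correct and follows essentially the same route as the paper's: induction via the recurrence $g(n) = \sum_{d \mid n,\, d > 1} g(n/d)$, then bounding the divisor sum by the full tail $\sum_{d \geq 2} d^{-\rho} = \zeta(\rho) - 1 = 1$. The only cosmetic difference is that the paper uses a strict inequality when comparing the divisor sum to the tail, but your $\leq$ suffices equally well to close the induction.
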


\begin{proof} We proceed by induction. We already know that $g(1) = 1^\rho = 1$. Suppose $n > 1$. Every factorization of $n$ has the form $a_1 \cdot a_2 \cdots a_k$, where $a_1$ is a divisor of $n$ greater than $1$. The number of tuples $(a_2, \ldots, a_k)$ is equal to $g(n/a_1)$. Therefore,
\[g(n) = \sum_{\substack{d | n \\ d > 1}} g\left(\frac{n}{d}\right) \leq \sum_{\substack{d | n \\ d > 1}} \left(\frac{n}{d}\right)^\rho = n^\rho \sum_{\substack{d | n \\ d > 1}} \frac{1}{d^\rho} < n^\rho \sum_{d = 2}^\infty \frac{1}{d^\rho} = n^\rho (\zeta(\rho) - 1) = n^\rho. \qedhere\]
\end{proof}

Maximal orders for \emph{unordered} factorizations have a rich history as well. Oppenheim \cite{O} found a slightly erroneous asymptotic formula for the maximal order, which Canfield, Erd{\H o}s, and Pomerance \cite{CEP} later corrected and refined. In addition, Mattics and Dodd \cite{MatDo} later showed that the number of unordered factorizations of $n$ is less than $n/\log n$ for all $n \neq 1, 144$, resolving a conjecture of Hughes and Shallit \cite{HuS}.

The function $b(n)$ has a similar recurrence relation to $g(n)$. The base case is $b(1) = 1$. For larger $n$, we have the following. Let $n = a_1 + a_2 + \cdots + a_k$. If we factor out $a_k$ and subtract $1$, we obtain $(n/a_k) - 1 = (a_1/a_k) + (a_2/a_k) + \cdots + (a_{k - 1}/a_k)$. For a given $a_k$, the number of such sums is $b((n/a_k) - 1)$. Setting $d = a_k$ and summing over all possible $d$ gives us
\[b(n) = \sum_{\substack{d | n \\ d > 1}} b\left(\frac{n}{d} - 1\right).\]
Applying Coppersmith and Lowenstein's proof gives us $b(n) \leq n^\rho$ and $a(n) \leq 2n^\rho$. Interestingly, $a(n)$ satisfies the recurrence
\[a(n) = \sum_{d | n} a\left(\frac{n}{d} - 1\right)\]
with $a(0) = 1$.

As for minimal orders, we simply observe that $b(p) = 1$ for all primes $p$ simply because the only possible sum is $p$ itself. Unfortunately, this tells us nothing about the minimal order of $a(n)$. Given that $b(n)$ depends on the divisors of $n - 1$ and not the divisors of $n$, one should expect that it has significantly less variation that $g(n)$. In light of this fact, we propose the following.

\begin{conj} As $n \to \infty$, we have $b(n) \asymp n^{\rho - 1}$ for composite $n$ and $a(n) \asymp n^{\rho - 1}$ for all $n$.
\end{conj}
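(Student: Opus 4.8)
The plan is to first reduce the statement about $a(n)$ to the one about $b(n)$, and then to attack the bounds on $b(n)$ through the recurrence $b(n) = \sum_{d | n,\, d > 1} b(n/d - 1)$. For the reduction, recall $a(n) = b(n) + b(n-1)$, and note that for $n \geq 4$ at least one of $n, n-1$ is composite, since the even member of the pair is then $\geq 4$. If $m \in \{n-1, n\}$ is prime then $b(m) = 1 \leq m^{\rho - 1}$, while if $m$ is composite the conjectured $b(m) \asymp m^{\rho-1}$ applies; since $(n-1)^{\rho-1} \asymp n^{\rho-1}$, both the upper and lower bounds for $a(n) \asymp n^{\rho-1}$ follow at once from the corresponding bounds for $b$. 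So the whole problem rests on showing $b(n) \asymp n^{\rho-1}$ for composite $n$.

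For the upper bound I would argue by induction, assuming $b(m) \leq C m^{\rho-1}$ for all composite $m < n$ and using $b(m) = 1$ for $m$ prime. Splitting the recurrence according to whether $n/d - 1$ is prime or composite gives
\[ b(n) \leq C n^{\rho-1} \sum_{d | n,\, d > 1} d^{-(\rho-1)} + d(n). \]
The trouble is immediate: since $\rho - 1 \approx 0.73 < 1$, the sum $\sum_{d | n} d^{-(\rho-1)} = \sigma_{-(\rho-1)}(n)$ is not bounded but grows along highly composite $n$, so the naive induction only yields $b(n) \ll n^{\rho-1} \sigma_{-(\rho-1)}(n)$, leaving a spurious slowly growing factor. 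This is exactly where the $-1$ shift must be exploited: unlike $g$, whose recurrence feeds the divisors $n/d$ back into itself and reinforces the spikes at highly composite $n$, here the arguments $n/d-1$ are generic integers whose multiplicative structure is essentially uncorrelated with that of $n$. I would therefore replace the pointwise inductive bound by an averaged one, iterating the operator $f \mapsto \sum_{d | n,\, d > 1} f(n/d-1)$ a bounded number of times and controlling the innermost sum through the known asymptotic $B(x) \sim (c/2) x^\rho$; the goal is to show that although there are $d(n)$ divisors, the terms $b(n/d-1)$ cannot all be simultaneously near the worst case, so that the effective size of $\sigma_{-(\rho-1)}(n)$ is $O(1)$ after averaging over the shift.

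For the lower bound on composite $n$ I would select a single favorable divisor chain: writing $n = d_1 m$ with $d_1$ its least prime factor gives $b(n) \geq b(m-1)$, and iterating produces $b(n) \geq b(n')$ for a descending sequence of arguments. The difficulty mirrors the upper bound: to conclude $b(n) \gtrsim n^{\rho-1}$ one needs the intermediate arguments $n/d - 1$ themselves to be composite with $b$ of the expected order, which is precisely what is being proved, so the argument is circular unless one can guarantee enough composite values among the shifted divisors. Here I expect to need an input on the arithmetic of the integers $\{n/d - 1 : d | n\}$ — essentially that a positive proportion of them are composite with a non-degenerate divisor sum — which is not supplied by anything in the present paper.

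The step I expect to be the genuine obstacle is turning the heuristic ``the shift by $1$ decorrelates $b(n/d-1)$ from the divisor structure of $n$'' into a theorem. The averaging result available, namely the asymptotic for $B(x)$ obtained by Wiener--Ikehara from a Dirichlet series, controls $b(n)$ only on average, and the non-multiplicativity introduced by the $-1$ blocks any direct Euler-product or Dirichlet-series treatment of $b$ itself. Converting mean-value information into the pointwise two-sided bound $b(n) \asymp n^{\rho-1}$ would require understanding the joint distribution of the values $b$ takes at the shifted arguments $n/d - 1$, and it is this correlation estimate — rather than any single inequality — that I view as the crux of the conjecture.
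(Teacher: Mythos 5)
This statement is a \emph{conjecture} that the paper explicitly leaves open; there is no proof in the paper to compare against, and your proposal, as you yourself say plainly, is not a proof either. So the verdict is: genuine gap, and you have named it accurately. Your reduction of the $a(n)$ statement to the $b(n)$ statement via $a(n) = b(n) + b(n-1)$ and the observation that one of $n, n-1$ is composite for $n \geq 4$ is correct and unobjectionable. But both halves of the $b(n)$ bound remain unproved in your sketch. For the upper bound, the failure you point out is real and is exactly the reason the Coppersmith--Lowenstein induction does not transfer: their argument closes because the exponent $\rho$ satisfies $\sum_{d \geq 2} d^{-\rho} = \zeta(\rho) - 1 = 1$, whereas the conjectured exponent $\rho - 1 \approx 0.73$ is less than $1$, so $\sum_{d \mid n,\, d>1} d^{-(\rho-1)}$ is unbounded along, say, primorials, and the induction only returns $b(n) \ll n^{\rho-1}\sigma_{-(\rho-1)}(n)$, which is strictly weaker than the claim. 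For the lower bound, your divisor-chain argument is circular exactly as you describe: it needs the values $b(n/d-1)$ at composite shifted arguments to already be of order $(n/d)^{\rho-1}$.

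The crux you isolate --- converting the mean-value statement $B(x) \sim (c/2)x^\rho$ into a pointwise two-sided bound by showing that the shift by $1$ decorrelates $b(n/d-1)$ from the divisor structure of $n$ --- is precisely the content of the conjecture, and nothing in the paper (nor in the Erd{\H o}s--Loxton machinery it cites) supplies such a correlation estimate. Note also a structural reason your averaging strategy cannot be routine: for the unshifted function $g$, the analogous conjecture is \emph{false}, since $\max_{n \leq x} g(n)$ is of size $x^{\rho}$ up to subpolynomial factors, vastly exceeding the average order $x^{\rho - 1}$; so any correct proof must use the $-1$ shift in an essential, quantitative way rather than as a heuristic. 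Your write-up is a fair map of the difficulty, but it establishes neither inequality, and it should be presented as a problem analysis, not a proof attempt that could be completed by routine means.
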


\end{document}